\theoremstyle{plain}
\newtheorem{theorem}{Theorem}[section]
\newtheorem{lemma}{Lemma}[section]
\newtheorem{corollary}{Corollary}[section]
\newtheorem{proposition}{Proposition}[section]
\newtheorem*{claim*}{Claim}
\newtheorem*{lemma*}{Lemma}
\newtheorem*{theorem*}{Theorem}
\theoremstyle{definition}
\newtheorem{definition}{Definition}[section]
\theoremstyle{remark}
\newtheorem*{remark*}{Remark}
\begin{document}

\title{Finite type domains with hyperbolic orbit accumulation points}

\author{Bingyuan Liu}
\address{Department of Mathematics, Washington University, Saint Louis, USA}
\email{bingyuan@math.wustl.edu}



\date{\today}



\begin{abstract}
In this paper, finite type domains with hyperbolic orbit accumulation points are studied. We prove, in case of $\mathbb{C}^2$, it has to be a (global) pseudoconvex domain, after an assumption of boundary regularity. Moreover, one of the applications will realize the classification of domains within this class, precisely the domain is biholomorphic to one of the ellipsoids $\lbrace (z, w): \vert z\vert^{2m}+\vert w\vert^2<1, m\in\mathbb{Z}^+\rbrace$. This application generalizes \cite{BP98} in which the boundary is assumed to be real analytic for the case of hyperbolic orbit accumulation points. 
\end{abstract}

\maketitle

\section{Introduction}

Let $\Omega$ be a smooth bounded domain in $\mathbb{C}^n$ and $p\in\partial\Omega$. It was a long time since Greene-Krantz posted their conjecture in \cite{GK91}, which states, if $p$ is a boundary orbit accumulation point, then $p$ is a point of finite type. By orbit accumulationn boundary point $p$, we mean a boundary point $p\in\partial\Omega$ such that $\lim_{j\to\infty}f_j(q)=p$ where $q\in\Omega$ and $f_j\in\rm Aut(\Omega)$. There are numerous works on this problem for 20 years by many mathematicians, e.g., we just mention some (in alphabet order), Eric Bedford, Jisoo Byun, Robert Greene, Kang-Tae Kim, Sung-Yeon Kim, Mario Landucci, Steven Krantz, Sergey Pinchuk, Jean-Christophe Yoccoz. Partial results have already been achieved, e.g. \cite{BP88}, \cite{BP91}, \cite{BP98}, \cite{GK91}, \cite{GK93}, \cite{KY11}, \cite{Ki12}, \cite{Kr11}, \cite{Kr12}, \cite{La04}. Among those, recently, Sung-Yeon Kim publishes the result in her paper \cite{Ki12} which proves the Greene-Krantz conjecture in case of hyperbolic orbit accumulation points. In this note, we consider the domain with noncompact automorphism groups from another point of view, namely, to check whether it is globally pseudoconvex. By pseudoconvex, we usually mean here weakly pseudoconvex, since a strongly pseudoconvex domain with noncompact automorphism groups will make the domain a ball by the well-known Wong-Rosay theorem (see \cite{Ro79} and \cite{Wo77}). 

Let $\Omega\in\mathbb{C}^2$ be a domain with real analytic boundary. It was shown by Bedford-Pinchuk that noncompact automorphism group implies $\Omega$ is biholomorphic to one of the ellipsoids $\lbrace (z,w): \vert z\vert^{2m}+\vert w\vert^2, m\in\mathbb{Z}^+ \rbrace$. On can easily check that ellipsoids are globally pseudoconvex. However, if the problem passes to the category of smooth boundary, i.e. the defining function is $C^\infty$, the answer is not so clear as the domain with real analytic boundary. The difficulty is that some of the tools for real analytic boundary like Segre variety and analytic variety, cannot be used. Shortly after \cite{BP88}, Catlin pointed out (unpublished) a pseudoconvex domain with boundary of finite type with noncompact automorphism group should be enough to be an ellipsoid (analytic is not necessary). However, one still wonders if ``pseudoconvex'' can be removed. The author will try to replace ``pseudoconvex'' with other assumptions, although he is unable to remove it completely so far.

In the present note, we mainly work on the following result.

\begin{theorem}\label{mainth}
Let $\Omega\subset\mathbb{C}^2$ be a bounded domain with smooth boundary of finite type. Suppose that the Bergman kernel of $\Omega$ extends to $\overline{\Omega}\times\overline{\Omega}$ minus the boundary diagonal set as a locally bounded function. Let $p\in\partial\Omega$ be a hyperbolic orbit accumulation point. Then $\Omega$ is globally pseudoconvex. 
\end{theorem}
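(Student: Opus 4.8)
The plan is to argue by contradiction: suppose $\Omega$ is not pseudoconvex, and derive a contradiction with the existence of a hyperbolic orbit accumulation point together with the boundary regularity of the Bergman kernel.

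First I would recall what a hyperbolic orbit accumulation point gives us. Since $p\in\partial\Omega$ is a hyperbolic orbit accumulation point, there is an automorphism orbit $f_j(q)\to p$ and, by the scaling/rescaling machinery (Pinchuk scaling near a finite type boundary point in $\mathbb{C}^2$, together with the hyperbolicity assumption on $p$), the domain $\Omega$ is biholomorphic to a model domain — a normal form for the local CR geometry at $p$. In $\mathbb{C}^2$ with $p$ of finite type $2m$, the scaling limit is a polynomial domain $\{(z,w): \operatorname{Re} w + P(z,\bar z) < 0\}$ where $P$ is a subharmonic (this uses the orientation given by $\Omega$ being a domain, i.e. locally on one side) polynomial of degree $2m$ without harmonic terms; hyperbolicity of $p$ forces this model to have noncompact automorphism group acting in the expected parabolic/hyperbolic way, and in fact forces $P(z,\bar z) = |z|^{2m}$ up to automorphism. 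So the biholomorphism type of $\Omega$ is pinned down to one of the ellipsoids $E_m = \{|z|^{2m}+|w|^2<1\}$, which are pseudoconvex. The subtlety that prevents this from being immediate is that the scaling limit only captures the geometry \emph{near} $p$; the biholomorphism with $E_m$ is exactly what needs the global hypothesis, and here is where the Bergman kernel assumption enters.

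The key mechanism: I would use the localization of the Bergman kernel. On a bounded domain, $K_\Omega(z,z) \to \infty$ as $z\to\partial\Omega$, and the rate of blow-up along $f_j(q)\to p$ is governed by the finite type of $p$ (Catlin's estimates); transformation law $K_\Omega(q,q) = |\det f_j'(q)|^2 K_\Omega(f_j(q),f_j(q))$ shows $|\det f_j'(q)|\to 0$ at a controlled rate. The hypothesis that $K_\Omega$ extends to $\overline{\Omega}\times\overline{\Omega}$ off the boundary diagonal as a locally bounded function is a \emph{global} regularity statement — it is not automatic for non-pseudoconvex domains, and in fact it is essentially a disguised pseudoconvexity-type condition. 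Concretely, I would show: if $\Omega$ were not pseudoconvex, then $\Omega$ would have a nontrivial envelope of holomorphy $\widehat\Omega \supsetneq \Omega$, every $f_j\in\operatorname{Aut}(\Omega)$ would extend to $\widehat\Omega$, and the Bergman space $A^2(\Omega)$ would fail to separate points of $\overline\Omega$ near certain boundary points of $\Omega$ that lie in the interior of $\widehat\Omega$ — forcing $K_\Omega$ to remain bounded (indeed the reproducing property would pull it across such points), contradicting the blow-up $K_\Omega(f_j(q),f_j(q))\to\infty$ near the finite type point $p\in\partial\widehat\Omega$. Put differently: near a boundary point that is ``interior from the hull's perspective'' the Bergman kernel is bounded, but hyperbolicity and finite type at $p$ force the orbit to approach a genuine strongly-pseudoconvex-in-the-limit direction where $K_\Omega$ blows up, and the only way to reconcile the locally-bounded-extension hypothesis with this blow-up is that no such hull exists, i.e. $\Omega$ is pseudoconvex.

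The main obstacle I expect is making precise the interaction between the local scaling analysis at $p$ and the global Bergman extension hypothesis — specifically, ruling out the possibility that $p$ itself lies over a point where the kernel stays bounded, and controlling how the automorphisms $f_j$ behave on the (a priori only locally defined) envelope of holomorphy. I would handle this by combining: (i) the stability of finite type under the scaling (so the blow-up rate of $K_\Omega$ along the orbit is strictly positive), (ii) a normal families argument for $\{f_j\}$ on compact subsets of $\Omega$ together with the locally bounded extension of $K_\Omega$ to pin down the limit behavior on $\overline\Omega$, and (iii) the observation that pseudoconvexity of $\Omega$ is equivalent, in this setting, to the non-existence of boundary points admitting such a bounded Bergman extension while simultaneously being finite-type orbit accumulation points — the hyperbolicity hypothesis being exactly what excludes the degenerate (parabolic, infinite-type-in-the-limit) alternatives. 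The real analytic case of \cite{BP98} is recovered because there the hull, if nontrivial, would have real analytic boundary pieces on which one could run Segre variety arguments directly; the point of the present theorem is to replace that analytic input by the Bergman kernel hypothesis.
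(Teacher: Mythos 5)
Your proposal does not track the paper's argument, and as written it has two gaps that I think are fatal rather than cosmetic.

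First, the step where you claim the scaling limit at $p$ is $\{\operatorname{Re}w+P(z,\bar z)<0\}$ with $P$ \emph{subharmonic}, justified by ``the orientation given by $\Omega$ being a domain, i.e.\ locally on one side,'' is exactly the content of the theorem at the local level, and the justification offered is not valid: a smooth hypersurface bounding a domain from one side can perfectly well have pseudoconcave directions, so being locally on one side gives no sign on $\Delta P$. Once you assume $P$ subharmonic you have already assumed local pseudoconvexity at $p$, and the appeal to the ellipsoid classification is then circular (the Bedford--Pinchuk classification in the smooth finite type case \emph{requires} pseudoconvexity as input; only the real analytic version of \cite{BP98} avoids it, and that hypothesis is not available here). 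The paper's actual route is: use the Bergman hypothesis together with Kim's theorem \cite{Ki12} and Kim--Yoccoz \cite{KY11} to put $\partial\Omega$ near $p$ in the rigid form $v=P(z,\bar z)$ with $P$ homogeneous (Lemma \ref{thefirstlemma}); observe that homogeneity makes the sign of the Levi form depend only on $\arg z$, decomposing a neighborhood of $p$ into pseudoconvex and pseudoconcave sectors (Lemma \ref{lemmaabouttheta}); show via attached analytic discs and the Bishop equation that if a strongly pseudoconcave sector is present then CR functions extend holomorphically \emph{through} $p$ to the outside (Proposition \ref{prop}); and derive a contradiction from $J(f_j)\to 0$ versus the normal-family limit of the extended $J(f_j^{-1})$ on a compact set containing $p$ in the interior of the hull. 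None of this local analysis appears in your sketch, and it is the heart of the proof.

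Second, your use of the Bergman hypothesis misreads it. The hypothesis concerns local boundedness \emph{off} the boundary diagonal; it says nothing against the on-diagonal blow-up $K_\Omega(z,z)\to\infty$, so there is no tension between the two to exploit. More importantly, the paper explicitly notes (citing \cite{Kr11}, e.g.\ the shell) that non-pseudoconvex domains satisfying this hypothesis exist, so the hypothesis is emphatically \emph{not} ``a disguised pseudoconvexity-type condition,'' and the contradiction you propose (bounded kernel near hull-interior boundary points versus blow-up along the orbit) cannot close: the orbit accumulates at $p$, which may well be a genuine boundary point of the hull even if other parts of $\partial\Omega$ are not. In the paper the hypothesis enters only to secure the boundary regularity needed for Kim's theorem (condition R, smooth extension of the normalizing biholomorphism). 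Finally, even granting local pseudoconvexity at $p$, the passage to \emph{global} pseudoconvexity needs a separate argument (the paper invokes Proposition 1 of \cite{Kr12}); your sketch does not address this step.
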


For the sake of completeness, we define the so-called ``orbit accumulation points''.

\begin{definition}
Let $\Omega$ be a smoothly bounded domain in $\mathbb{C}^2$. If there exist points $q\in\Omega$, $p\in\partial\Omega$ and a sequence $\lbrace f_\nu\rbrace\subset\rm Aut(\Omega)$ such that $f_\nu(q)$ converges to $p$. The point $p$ is called an orbit accumulation point. If $f_\nu^{-1}(q)$ converges to another boundary point $\tilde{p}\in\partial\Omega-p$, where $f_\nu^{-1}$ is the inverse of $f_\nu$, then $p$ is called a hyperbolic orbit accumulation point.
\end{definition}

The method of proof involves analysis of $\partial\Omega$ and the tools borrowed from CR geometry. We also try to write this note as concise as possible.

We should remark that in Theorem \ref{mainth}, ``finite type'' can be replaced with ``boundary satisfying condition R in sense of Bell with $p$ that is holomorphically simple (i.e. there is no complex variety through $p$ that lies in the boundary)''. Furthermore, the result is extended to higher dimensions in the author's forthcoming paper \cite{Li13}.

We also remark that for general case (not the hyperbolic accumulation pints), the method might not work. It is because that the boundary might not be defined by a rigid equation then, even locally. Moreover, the condition of extension of the Bergman kernel to the boundary minus the diagonal set is verifiable when the $\overline{\partial}$-Neumann problem is pseudolocal.

\section{Preliminary}

The Hilbert transform has a long history in both fields of one complex variable and several complex variables. 

In particular, the Hilbert transform on the unit disc is most important. Let $u$ be a real-valued function on $\partial\Delta$. Setting $z=re^{i\theta}$ with $0\leq r<1$ and $\zeta=e^{it}$, we define
\begin{equation*}
T'u(re^{i\theta})=\frac{1}{2\pi}\int_{-\pi}^{\pi} K_r(t)u(e^{i(\theta-t)})\,\mathrm{d}t,
\end{equation*}
where
\begin{equation*}
K_r(t)=\frac{2r\sin t}{1-2r\cos t+r^2}
\end{equation*}
is the Hilbert kernel (closely related to the well-known Poisson kernel).

Roughly speaking, the Hilbert transform is the limit function $T'u(re^{i\theta})$ as $r\rightarrow 1^-$. One can treat the following fact as the definition of the Hilbert transform.

For $u\in C^\alpha(\partial\Delta)$,
\begin{equation*}
Tu(e^{i\theta})=\mathrm{p.v.}\frac{1}{2\pi}\int_{-\pi}^{\pi} \frac{u(e^{i(\theta-t)})}{\tan(t/2)}\,\mathrm{d}t
\end{equation*}

Due to the well-known Riemann mapping theorem and Carath\'{e}odory theorem, the Hilbert transform works on an arbitrary non-empty simply connected open subset of the complex number plane $\mathbb{C}$ which is not all of $\mathbb{C}$, whose boundary is a Jordan curve.

The following fact from \cite{MP06} gives readers a nice intuition.

\begin{theorem}[See Lemma 2.25 of Chapter IV in \cite{MP06}]
The Hilbert transform $Tu$ on $\partial\Delta$ is the boundary value on $\partial\Delta$ of the unique harmonic conjugate in $Delta$ of the harmonic Poisson extension $Pu$, that vanishes at $0\in\Delta$.
\end{theorem}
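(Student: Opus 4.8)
The plan is to realize the harmonic conjugate explicitly by a Herglotz--Schwarz type integral and then recover $Tu$ as its boundary value. For $z=re^{i\theta}\in\Delta$ put
\[
F(z)=\frac{1}{2\pi}\int_{-\pi}^{\pi}\frac{e^{it}+z}{e^{it}-z}\,u(e^{it})\,\mathrm{d}t .
\]
For each fixed $t$ the kernel $\frac{e^{it}+z}{e^{it}-z}$ is holomorphic in $z$ on $\Delta$, so (differentiating under the integral) $F$ is holomorphic on $\Delta$. Separating real and imaginary parts one computes
\[
\frac{e^{it}+z}{e^{it}-z}=\frac{1-r^2}{1-2r\cos(\theta-t)+r^2}+i\,\frac{2r\sin(\theta-t)}{1-2r\cos(\theta-t)+r^2},
\]
whose real part is the Poisson kernel and whose imaginary part is $K_r(\theta-t)$, the Hilbert kernel of the Preliminary. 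Hence $\operatorname{Re}F=Pu$, and after the substitution $t\mapsto\theta-t$ one finds $\operatorname{Im}F(re^{i\theta})=T'u(re^{i\theta})$. Finally $F(0)=\frac{1}{2\pi}\int_{-\pi}^{\pi}u(e^{it})\,\mathrm{d}t$ is real, so $\operatorname{Im}F(0)=0$.

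I would then record the (standard) uniqueness: if $v_1,v_2$ are harmonic conjugates of $Pu$ on the connected domain $\Delta$, then $i(v_1-v_2)$ is holomorphic with vanishing real part, hence a purely imaginary constant, so $v_1-v_2$ is a real constant; normalizing at $0$ forces $v_1=v_2$. Thus $v:=\operatorname{Im}F$ is the harmonic conjugate of $Pu$ that vanishes at $0$, and the theorem is reduced to showing that $v$ has $Tu$ as its boundary value on $\partial\Delta$, i.e. that $\lim_{r\to1^-}T'u(re^{i\theta})=Tu(e^{i\theta})$ for $u\in C^\alpha(\partial\Delta)$.

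For this last step I would use that $K_r$ is odd in its argument, so $\frac{1}{2\pi}\int_{-\pi}^{\pi}K_r(t)\,\mathrm{d}t=0$, and rewrite
\[
T'u(re^{i\theta})=\frac{1}{2\pi}\int_{-\pi}^{\pi}K_r(t)\bigl(u(e^{i(\theta-t)})-u(e^{i\theta})\bigr)\,\mathrm{d}t .
\]
A standard estimate gives $|K_r(t)|\le C/|t|$ uniformly in $r\in[0,1)$ and $|t|\le\pi$, while $u\in C^\alpha(\partial\Delta)$ gives $|u(e^{i(\theta-t)})-u(e^{i\theta})|\le C|t|^\alpha$; hence the integrand is bounded by $C|t|^{\alpha-1}\in L^1(-\pi,\pi)$, with a majorant independent of $r$ and $\theta$. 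Since $K_r(t)\to\cot(t/2)$ pointwise for $t\neq0$ as $r\to1^-$, dominated convergence yields $T'u(re^{i\theta})\to\frac{1}{2\pi}\int_{-\pi}^{\pi}\cot(t/2)\bigl(u(e^{i(\theta-t)})-u(e^{i\theta})\bigr)\,\mathrm{d}t$, uniformly in $\theta$; and since $\mathrm{p.v.}\int_{-\pi}^{\pi}\cot(t/2)\,\mathrm{d}t=0$ this limit is exactly $Tu(e^{i\theta})$. Uniform convergence then also shows $v$ extends continuously to $\overline{\Delta}$ with boundary value $Tu$.

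The genuine obstacle is precisely this boundary passage: the convergence $K_r(t)\to\cot(t/2)$ breaks down near $t=0$, where the limiting kernel is not integrable, so one cannot simply pass to the limit inside the integral; the oddness of $K_r$ together with the H\"older modulus of continuity of $u$ must be used both to produce the uniform integrable majorant and to make sense of the principal value. The holomorphic part of the argument, by contrast, is essentially formal.
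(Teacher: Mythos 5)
The paper does not actually prove this statement; it is quoted verbatim from Lemma 2.25 of Chapter IV in \cite{MP06}, so there is no in-paper argument to compare yours against. Your proof is the standard one and is correct: the Schwarz--Herglotz integral exhibits $Pu$ and $T'u$ as the real and imaginary parts of a holomorphic function whose imaginary part vanishes at $0$, uniqueness of the normalized conjugate is immediate, and the boundary passage via the oddness of $K_r$, the uniform bound $|K_r(t)|\le|\cot(t/2)|\le C/|t|$, and the H\"older modulus of $u$ is exactly how this is handled in the literature. The only point you assert rather than derive is that the convergence $T'u(re^{i\theta})\to Tu(e^{i\theta})$ is \emph{uniform} in $\theta$: dominated convergence by itself gives only pointwise convergence, and uniformity requires the routine extra step of splitting the integral at $|t|=\delta$, making the near part uniformly small via the $\theta$-independent majorant $C|t|^{\alpha-1}$ and using uniform convergence of $K_r(t)\to\cot(t/2)$ on $\delta\le|t|\le\pi$.
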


In this note, we mainly use the modified Hilbert transform as following.

\begin{definition}
The modified Hilbert transform $T_1$ is defined by
\begin{equation*}
T_1u(e^{i\theta}):=Tu(e^{i\theta})-Tu(1),
\end{equation*}
where $u\in C^\alpha(\partial\Delta)$ and $T$ is the (classical) Hilbert transform.
\end{definition}

The modified Hilbert is a mild modification such that it suits more applications. Specifically, it is used to solve the Bishop equation, namely,
\begin{equation*}
U(\zeta)=-T_1(\rho(Z(\cdot),\overline{Z(\cdot)}, U(\cdot)))(\zeta)+c, \zeta\in S^1
\end{equation*}
where $T_1$ is modified Hilbert transform, $\rho$ is a defining function, $Z$ is a given known analytic disc and $c$ is a given real number. 

In fact, Let $M$ be the hypersurface in $\mathbb{C}^n=(z, w)$ with a defining function $\rho$, where $z\in\mathbb{C}^{n-1}$ and $z\in\mathbb{C}$. Suppose we are given an arbitrary analytic disc $Z: \overline{\Delta}\rightarrow\mathbb{C}^{n-1}$, where $\overline{\Delta}$ denotes the unit close disc in $\mathbb{C}$, and a real number $c$. By solving the Bishop equation, one can obtain an analytic disc $A(\zeta)=(Z(\zeta), W(\zeta))$ attached to $M$. Moreover the projection of $A(\zeta)$ onto $z$-component is exactly $Z(\zeta)$ and $\Re w(1)=c$. Precisely and more generally, by the solution of the Bishop equation, we usually mean the following (see also \cite{BE99}, \cite{Bo91} and \cite{MP06}):

Let $M$ be a smooth generic submanifold through $0$, given by $v=\rho(z, \bar{z}, u), z\in\mathbb{C}^{n-d}, w=u+iv\in\mathbb{C}^d$ of class $C^2$ with $h(0)=0$, $Dh(0)=0$ and let $0<\alpha<1$. Then there exists $\epsilon>0$ such that for any analytic disc $Z: \Delta\rightarrow\mathbb{C}^n$ in $C^{0, \alpha}(\Delta)$ with $\parallel Z\parallel_{0, \alpha}<\epsilon$ and for any $c\in\mathbb{R}^d$ with $\vert c\vert<\epsilon$, there exists a unique (small) analytic disc $A(\zeta)=(Z(\zeta), W(\zeta))$ of class $C^{0, \alpha}$ attached to $M$ such that $\Re W(1)=c$. In addition, if $\rho$ is of class $C^k$ $(k\geq 0)$, then there is an $\epsilon>0$ such that $H: C^{k, \alpha(S^1}, \mathbb{C}^{n-d})\times\mathbb{R}^d\rightarrow C^{k, \alpha}(S^1, \mathbb{R}^d)$ depends in a $C^{k-1}$ fashion on $c\in\mathbb{R}^d$ and $Z\in C^{k, \alpha}(S^1, \mathbb{C}^{n-d})$, with $\vert c\vert<\epsilon$ and $\parallel Z \parallel_{k, \alpha}<\epsilon$. Here $C^{k, \alpha}$ denotes the class of H\"{o}lder space and the norm $\displaystyle\parallel Z \parallel_{k, \alpha}=\sum\limits_{\mid\gamma\mid\leq k}\parallel D^\gamma Z\parallel_{C(S^1)}+\sum\limits_{\mid\gamma\mid=k}\parallel D^\gamma Z\parallel_{C^{0, \alpha}(S^1)}$.

With the solution of the Bishop equation, Tr\'{e}preau and Tumanov proved two celebrated theorems (see \cite{Tr86} for the case of hypersurfaces and \cite{Tu90} for the case of submanifolds). Specifically, for the case of hypersurfaces, Tr\'{e}preau proved that each CR function on a minimal hypersurface, can be extended to a holomorphic function on one side of the hypersurface locally. 

Another important tool is developed by Kim(Kang-Tae)-Yoccoz in \cite{KY11}. They borrowed well-known results from dynamical systems in \cite{PY90} to study contractions in CR geometry. Indeed, in the case of hypersurfaces, they are able to prove that a germ of a hypersurface $M$ that admits contractions has a defining function that can be written as $v=P(z, \bar{z})$, where $P$ is a weighted homogeneous polynomial. More recently, Kim (Sung-Yeon) proved a theorem solving the Greene-Krantz conjecture in case of hyperbolic orbit accumulation points in \cite{Ki12}. She proved also in case of hyperbolic orbit accumulation points, around the orbit accumulation point, the germ of hyperbolic orbit accumulation point admits a contraction. This finishes partially the 20-years old conjecture. 

In this note, we assume the Bergman kernel of $\Omega$ extends to $\overline{\Omega}\times\overline{\Omega}$ minus the boundary diagonal set as a locally bounded function. We should discuss its existence in the nonpseudoconvex case, and otherwise, there is no sense to prove the result. By \cite{Kr11}, the nonpseudoconvex domain with a desired Bergman kernel exists, e.g. the shell.  

\section{Proof of Theorem \ref{mainth}}

To prove Theorem \ref{mainth}, we need several lemmas.

\begin{lemma}\label{thefirstlemma}
Let $\Omega$ be a smooth bounded domain with boundary of finite type in $\mathbb{C}^2$. Suppose that the Bergman kernel of $\Omega$ extends to $\overline{\Omega}\times\overline{\Omega}$ minus the boundary diagonal set as a locally bounded function. Then for any hyperbolic orbit accumulation boundary point $p$, there exists a contraction $f\in \rm Aut(\Omega)\cap\rm Diff(\Omega)$ at $p$. Moreover, $\Omega$ is biholomorphic to the domain $\lbrace (z, w): v<P(z, \bar{z})\rbrace$, where $P$ is a homogeneous polynomial. This biholomorphism extends smoothly up to boundary around $p$.
\end{lemma}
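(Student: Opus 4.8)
The plan is to assemble the lemma from three ingredients already quoted or available in the literature. First, I would use the hypothesis on the Bergman kernel to run Bell's transformation machinery (condition $R$, together with the finite-type hypothesis via the Catlin/Diederich--Fornaess theory guaranteeing subellipticity, hence condition $R$): since $p$ is a hyperbolic orbit accumulation point, there is a sequence $f_\nu\in\mathrm{Aut}(\Omega)$ with $f_\nu(q)\to p$ and $f_\nu^{-1}(q)\to\tilde p\neq p$. The extension of the Bergman kernel off the diagonal lets one apply Bell's boundary regularity argument to the automorphisms (exactly as in \cite{BP88}): each $f_\nu$, and its inverse, extends smoothly to a neighborhood of $q$ up to the boundary near $p$ and near $\tilde p$ respectively, and one obtains uniform estimates on compact subsets of $\overline\Omega\setminus\{p\}$ (resp. $\overline\Omega\setminus\{\tilde p\}$). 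Normal families then produce a limit; the hyperbolicity hypothesis (the two distinct accumulation points) is precisely what prevents degeneration to a constant and forces the existence of a nontrivial automorphism $f$ that, suitably renormalized, is a \emph{contraction} at $p$ in the sense of \cite{KY11}, i.e.\ $f$ fixes $p$, extends as a germ of a diffeomorphism near $p$, and the induced action on $\overline\Omega$ near $p$ contracts toward $p$. This gives the first assertion $f\in\mathrm{Aut}(\Omega)\cap\mathrm{Diff}(\Omega)$.

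Second, I would invoke the Kim--Yoccoz structure theorem quoted in the Preliminary section: a germ of a smooth hypersurface of finite type admitting a contraction has, after a local change of coordinates, a defining function of the form $v=P(z,\bar z)$ with $P$ a weighted homogeneous polynomial; in $\mathbb{C}^2$ with the scaling normalizations this $P$ can be taken homogeneous (the finite type exponent $2m$ fixing the weight). Because we are in $\mathbb{C}^2$ and $p$ is of finite type — in particular holomorphically simple, with no complex variety through $p$ in $\partial\Omega$ — the hypersurface near $p$ is minimal, so Trépreau's theorem (also quoted above) applies: CR functions extend holomorphically to one side. This is the key to upgrading the \emph{local} normal form of $\partial\Omega$ near $p$ to a \emph{global} biholomorphism $\Omega\cong\{v<P(z,\bar z)\}$. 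Concretely, following the Bedford--Pinchuk scheme, one uses the one-parameter-like family generated by the contraction $f$ together with a second automorphism to manufacture a holomorphic map defined on all of $\Omega$, lift the local CR-equivalence at $p$ across the boundary using Trépreau's one-sided extension, and check it is a biholomorphism onto the model domain by a monodromy/connectedness argument; the smooth extension up to the boundary near $p$ is then inherited from Bell's regularity of the automorphisms composed with the local smooth equivalence.

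Third — and this is where I would be most careful — one must verify that the model domain $\{v<P(z,\bar z)\}$ is actually the right global object and not merely a local model: a priori the contraction only controls a germ. The argument here is the standard dilation/scaling argument: conjugating $\Omega$ by the iterates $f^n$ and the dilations adapted to the weighted homogeneity, one shows the rescaled domains converge (in the local Hausdorff sense, using the finite-type uniform estimates on the Kohn--Catlin multitype to keep the rescalings nondegenerate) to the model, and since the automorphism group acts transitively enough along the orbit, the limit is in fact biholomorphic to $\Omega$ itself. I expect the main obstacle to be exactly this globalization step in the merely-$C^\infty$ category: without real-analyticity one loses Segre varieties and the analytic continuation arguments of \cite{BP98}, so the passage from the germ-level normal form to a genuine global biholomorphism must go entirely through (i) Bell's condition-$R$ regularity for the boundary behavior of automorphisms and (ii) Trépreau's one-sided extension for the CR-map, and one has to check these two tools patch together consistently — in particular that the extended map does not pick up singularities away from $p$, which is where the local boundedness of the off-diagonal Bergman kernel on all of $\overline\Omega\times\overline\Omega$ minus the diagonal (not just near $p$) is used.
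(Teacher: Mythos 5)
Your proposal follows essentially the same route as the paper: the paper's entire proof consists of citing Sung-Yeon Kim's theorem in \cite{Ki12} (with the Bergman-kernel hypothesis standing in for Bell's condition $R$ in the non-pseudoconvex setting) to produce the contraction, and then the Kim--Yoccoz normal form from \cite{KY11}, noting that a weighted homogeneous polynomial in one variable $z$ is homogeneous. Your extra globalization discussion goes beyond what the paper writes down, and your parenthetical claim that finite type alone yields subellipticity (hence condition $R$) is not valid without pseudoconvexity --- which is exactly why the off-diagonal Bergman kernel hypothesis is imposed --- but the skeleton of the argument is identical.
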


\begin{proof}
In view of Kim's theorem in \cite{Ki12}, the ``pseudoconvex'' can be replaced with ``finite type'' to satisfy ``condition R'' in sense of Bell. By Kim-Yocooz's theorem in \cite{KY11}, one obtains the desired result easily, because a weighted homogeneous polynomial in one complex variable is homogeneous.
\end{proof}

Lemma \ref{thefirstlemma} gives a good starting point for our work. In fact, it shows hyperbolic orbit accumulation points make $\partial\Omega$ defined with a rigid equation. By the rigid equation, we mean a equation without involving the real argument of $w$. 

Since we are planning to work with a hypersurface, not a domain, we introduce a definition to name the pseudoconvexity of hypersurfaces. We want to remark our definition is slightly different from that in standard CR geometry in order to be adapted in our discussion. We also want to distinguish the two sides of a germ of hypersurface by the pseudoconvex side and the pseudoconcave side (see Section \ref{proofofproposition}). 

\begin{definition}\label{pseudoconvexity}
Let $M\subset\mathbb{C}^2$ be a piece of hypersurface defined by $v=\rho(z, \bar{z}, u)$. If there is a pseudoconvex (respectively, strongly pseudoconvex) domain $G\subset\mathbb{C}^2$ so that $M\subset\partial G$ and $G$ is contained in $\lbrace (z,\bar{z}, u,v)\in\mathbb{C}^2: v<\rho(z,\bar{z},u)\rbrace$, then $S$ is said to be pseudoconvex (respectively, strongly pseudoconvex). If $G$ is contained in $\lbrace (z,\bar{z}, u,v)\in\mathbb{C}^2: v>\rho(z,\bar{z},u)\rbrace$, $M$ is said to be pseudoconcave (respectively, strongly pseudoconcave). 
\end{definition}

With the definition above, the pseudoconvexity in a hypersurface is defined by a rigid homogeneous polynomial will appear a nice property as what we are going to show.

\begin{lemma}\label{lemmaabouttheta}
Let $M$ be a smooth hypersurface passing through $(0, 0)$ in $\mathbb{C}^2$ defined by $0=\rho=v-P(z, \bar{z})$, where $P$ is a homogeneous polynomial with respect to $z$. Then locally around $(0, 0)$, the pseudoconvexity is solely determined by the argument of $z$. 
\end{lemma}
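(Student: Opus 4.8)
\emph{Proof proposal.} The plan is to reduce the statement to a direct computation of the Levi form of the rigid defining function $\rho=v-P(z,\bar z)$ and then to read off that its sign depends only on $\arg z$. First I would fix $\rho(z,\bar z,u,v)=v-P(z,\bar z)=\frac{w-\bar w}{2i}-P(z,\bar z)$ as the defining function of $M$ (note $P$ is necessarily real--valued for $\{v=P\}$ to be a real hypersurface), and recall that, in the sense of Definition \ref{pseudoconvexity}, whether $M$ is pseudoconvex (resp.\ strongly pseudoconvex, pseudoconcave, strongly pseudoconcave) at a point $q\in M$ is governed in the usual way by the sign of the Levi form $\mathcal{L}_\rho(q)(L_q,\bar L_q)$, where $L_q$ spans $T^{1,0}_qM$: taking $G$ to be a one--sided neighborhood of $M$ inside $\{v<P\}=\{\rho<0\}$ (resp.\ inside $\{v>P\}$) shows that pseudoconvexity of that side at $q$ is exactly nonnegativity (resp., by symmetry, nonpositivity) of this Levi form, with the strong notions corresponding to strict sign.

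Next I would carry out the Levi form computation. Since $\rho$ does not involve $u$, the only nonvanishing second mixed derivative is $\rho_{z\bar z}=-P_{z\bar z}$, while $\rho_{z\bar w}=\rho_{w\bar z}=\rho_{w\bar w}=0$, and the complex tangent field may be taken to be $L=\partial_z+2iP_z\,\partial_w$; hence
\begin{equation*}
\mathcal{L}_\rho(L,\bar L)=\rho_{z\bar z}=-P_{z\bar z}=-\tfrac14\,\Delta_z P ,
\end{equation*}
where $\Delta_z$ is the Laplacian in $z$. This already shows the Levi form is independent of $u=\Re w$. Writing $z=re^{i\phi}$ and using that $P$ is homogeneous in $z$, say of degree $d$, we have $P(z,\bar z)=r^{d}Q(\phi)$ for a real trigonometric polynomial $Q$, and the polar form of the Laplacian gives $\Delta_z(r^{d}Q(\phi))=r^{d-2}\bigl(d^2Q(\phi)+Q''(\phi)\bigr)$. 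Therefore, at a point $(re^{i\phi},w)\in M$,
\begin{equation*}
\mathcal{L}_\rho(L,\bar L)=-\tfrac14\,r^{d-2}\bigl(d^{2}Q(\phi)+Q''(\phi)\bigr).
\end{equation*}

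Finally I would conclude. For $r>0$ one has $r^{d-2}>0$ (the cases where $P$ is harmonic, in particular $d\le 1$, are trivial: $\Delta_z P\equiv 0$, $M$ is Levi flat, hence both pseudoconvex and pseudoconcave everywhere). So the sign of the Levi form at a point $(re^{i\phi},w)\in M$ near the origin equals the sign of $-\bigl(d^{2}Q(\phi)+Q''(\phi)\bigr)$, which depends only on the argument $\phi$ of $z$ and not on $|z|$ or $u$. In particular $M$ is pseudoconvex at such a point iff $d^{2}Q(\phi)+Q''(\phi)\le 0$ and pseudoconcave iff $d^{2}Q(\phi)+Q''(\phi)\ge 0$ (with strong (co)convexity corresponding to strict inequality), so the locus of pseudoconvexity is a union of sectors $\{\arg z\in I\}$ cut out by the zero set of $\phi\mapsto d^{2}Q(\phi)+Q''(\phi)$. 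The points over $z=0$ are exceptional, since there $\arg z$ is undefined and the Levi form degenerates when $d>2$; their behavior is determined by $d^2Q+Q''$ on the whole circle and follows by continuity, but this does not affect the statement for $z\neq 0$.

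As for the main difficulty: there is no serious obstacle, the content being the elementary identity $\Delta(r^{d}Q(\phi))=r^{d-2}(d^{2}Q+Q'')$ together with the vanishing of the mixed $w$--derivatives of the rigid $\rho$. The only points needing care are bookkeeping ones: matching the one--sided notions of Definition \ref{pseudoconvexity} to the correct sign of the Levi form, checking that the normalization of $L$ is irrelevant (a different choice merely rescales $\mathcal{L}$ by a positive factor $|L^z|^2$), and isolating the degenerate locus $z=0$.
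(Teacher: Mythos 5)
Your proposal is correct and follows essentially the same route as the paper: compute the Levi form of the rigid defining function $\rho=v-P(z,\bar z)$, observe that its sign is governed by $\Delta P$, and use homogeneity of $P$ in polar coordinates to see that the sign of $\Delta P$ depends only on $\arg z$. Your version is somewhat more explicit (the identity $\Delta(r^{d}Q(\phi))=r^{d-2}(d^{2}Q+Q'')$ and the careful matching of signs to the one-sided Definition of pseudoconvexity), but the underlying argument is the same.
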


\begin{proof}
By elementary calculations, one observes that the sign of Levi form is same as the one of $\Delta P$. Indeed, given defining function $\rho$, in $\mathbb{C}^2$ the Levi form is defined by $\dfrac{\partial^2\rho}{\partial z\partial \bar{z}}\left\vert\dfrac{\partial\rho}{\partial w}\right\vert^2-2\Re\left\lbrace\dfrac{\partial^2\rho}{\partial z\partial \bar{w}}\left(\dfrac{\partial\rho}{\partial\bar{z}}\right)\left(\dfrac{\partial\rho}{\partial w}\right)\right\rbrace+\dfrac{\partial^2\rho}{\partial w\partial \bar{w}}\left\vert\dfrac{\partial\rho}{\partial z}\right\vert^2$. This is because of the definition of Levi form with putting $\dfrac{\partial\rho}{\partial w}\dfrac{\partial}{\partial z}-\dfrac{\partial\rho}{\partial z}\dfrac{\partial}{\partial w}$ and its conjugate as the basis of complex tangent space of $M$. But, thanks to the explicit formula $\rho=v-P(z, \bar{z})$, the Levi form is $\dfrac{\partial^2 P}{\partial z\partial \bar{z}}\left\vert\dfrac{\partial\rho}{\partial w}\right\vert^2$ and clearly its sign is determined by $\Delta P$. Recall also, that in $\mathbb{C}^2$ we call a point $(z, w)$ Levi-flat, if its Levi form vanishes at that point. 

With the computation above, one can verify the following statement.
\begin{enumerate}
\item The point $(z, w)\in M$ is Levi-flat if and only if $\Delta P=0$ at $(z, w)$,
\item the point $(z, w)\in M$ is pseudoconvex in sense of Definition \ref{pseudoconvexity} if and only if $\Delta P\leq 0$ at $(z, w)$,
\item the point $(z, w)\in M$ is pseudoconcave in sense of Definition \ref{pseudoconvexity} if and only if $\Delta P\geq 0$ at $(z, w)$.
\end{enumerate}
So we just need to investigate the Laplacian of $P$. For this, we write $\Delta=\dfrac{1}{r}\dfrac{\partial}{\partial r}+\dfrac{\partial^2}{\partial r^2}+\dfrac{1}{r^2}\dfrac{\partial^2}{\partial\theta^2}$. Since $P$ is a homogeneous polynomial, we find $\Delta P=0$ just involves the argument $\theta$ and not $r$. It is clear that the solution gives finite numbers for $\theta$ which is because of analyticity of $\Delta P$. Moreover the solution of $\Delta P=0$ is union of intervals of $\theta$, e.g. $\lbrace \theta\in(\theta_1, \theta_2)\cup(\theta_3, \theta_4)\cup(\theta_7, \theta_9)\rbrace$. It completes the proof.
\end{proof}

Moreover, let $P_z: \mathbb{C}^2\rightarrow\mathbb{C}$ be the projection which maps point $(z, w)$ to the first component $z$. Then the pseudoconvex piece of $M$ is given by $P_z^{-1}(\cup_i S_i)\cap M$ where $S_i$ denotes the sectors where $\Delta\rho\leq 0$. The following Proposition gives a nice property which is useful in the proof of Theorem \ref{mainth}. Also, it is independently interesting, so we prove it in Section \ref{proofofproposition}.

\begin{proposition}\label{prop}
Let $D(0, \epsilon)$ be a disc center at 0 with radius $\epsilon$. Let $M\subset\mathbb{C}^2$ be a hypersurface passing through $(0,0)$ and satisfying the following properties: for $\lbrace\theta\rbrace_{i=1}^n\in[0, 2\pi)$ so that
\begin{enumerate}
\item $(z=x+iy=re^{i\theta}, w)\in M\cap P_z^{-1}D(0,\epsilon)$ is Levi-flat if and only if $\theta=\theta_i, i=1,2,\dots, n$. We denote $\Sigma_0$ as the set $\lbrace(z=re^{i\theta}):\theta=\theta_i, i=1,2,\dots, n\rbrace$. 
\item $(z=x+iy=re^{i\theta}, w)\in M\cap P_z^{-1}D(0,\epsilon)$ is strongly pseudoconvex if and only if there is $k\in\lbrace 1,2,\dots,n-1\rbrace$ so that $\theta_k<\theta<\theta_{k+1}$. We denote $\Sigma_+$ as the set $\lbrace(z=re^{i\theta}):\theta_k<\theta<\theta_{k+1}\rbrace$. 
\item $(z=x+iy=re^{i\theta}, w)\in M\cap P_z^{-1}D(0,\epsilon)$ is strongly pseudoconcave if and only if there is $l\in\lbrace 1,2,\dots,n-1\rbrace$ so that $\theta_l<\theta<\theta_{l+1}$. We denote $\Sigma_-$ as the set $\lbrace(z=re^{i\theta}):\theta_l<\theta<\theta_{l+1}\rbrace$. 
\end{enumerate}
Then it has a holomorphic extension through $(0,0)$ to both sides. In other words, there exists a polydisc $U_0$ centered at $(0, 0)$ and $M$ divides $U_0$ into two open subsets $U_0^+$ and $U_0^-$ respectively. For an arbitrary continuous CR function $f$ defined in $U_0\cap M$, there exist two holomorphic functions $\tilde{f}_1\in\mathcal{O}(U_0^+)$ and $\tilde{f}_2\in\mathcal{O}(U_0^-)$ such that $\tilde{f}_1\vert_{U_0\cap M}=\tilde{f}_2\vert_{U_0\cap M}=f$.
\end{proposition}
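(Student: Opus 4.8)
The plan is to obtain the two extensions by combining the Lewy extension phenomenon on the strongly pseudoconvex and strongly pseudoconcave parts of $M$ with a disc–filling (continuity principle) argument that pushes these one‑sided extensions in toward the vertex $(0,0)$. By Lemma \ref{lemmaabouttheta} I may assume $M=\{v=P(z,\bar z)\}$ with $P$ homogeneous, so that $\Sigma_0,\Sigma_+,\Sigma_-$ are as in the statement, all cones with vertex $0$ in the $z$‑plane and determined by the sign of $\Delta P$. The elementary fact I would record first is a sign computation for attached discs: if $Z\colon\overline\Delta\to\mathbb{C}$ is an analytic disc whose image lies in the closure of a sector with $\Delta P\le 0$, and $A=(Z,W)$ is the analytic disc attached to $M$ obtained by solving the Bishop equation (whose solvability and H\"{o}lder dependence are recalled in Section 1), then $\mathrm{Im}\,W$ is the Poisson extension of $P(Z,\overline Z)|_{\partial\Delta}$ while $P(Z(\zeta),\overline{Z(\zeta)})$ is superharmonic in $\zeta$, since $\Delta_\zeta(P\circ Z)=|Z'|^{2}\,(\Delta_z P)\circ Z\le 0$; hence the interior of $A$ lies in $\{v\le P\}$. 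Symmetrically, discs whose $z$‑image lies in a sector with $\Delta P\ge 0$ bulge into $\{v\ge P\}$. Thus the side into which attached discs point is dictated entirely by the sector structure of Lemma \ref{lemmaabouttheta}.

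Next I would record the local extensions. Over $\Sigma_+$ the hypersurface $M$ is strongly pseudoconvex with the witnessing domain $G$ on the $\{v<P\}$ side, so by the Lewy extension theorem every continuous CR function $f$ extends holomorphically to the $\{v<P\}$ side near any point of $\Sigma_+$; over $\Sigma_-$, symmetrically, $f$ extends to the $\{v>P\}$ side near any point of $\Sigma_-$. (Alternatively these extensions follow directly by attaching the small discs above to the pseudoconvex, resp.\ pseudoconcave, pieces and applying the continuity principle.) These local extensions are compatible with $f$ on $M$, and it remains to enlarge each of them to a full one‑sided neighborhood of $(0,0)$.

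The core of the proof is this enlargement, which amounts to propagating the one‑sided extensions across the intermediate Levi‑flat rays $\theta_i$ down to the vertex. For the $\{v<P\}$ side I would construct, via the Bishop equation, a continuous one‑parameter family $\{A_t\}_{t\in[0,1]}$ of analytic discs attached to $M$ such that: every $A_t$ bulges into $\{v<P\}$ (arranged using the sign computation above, by controlling how the $z$‑projection is distributed over the sectors with $\Delta P\le 0$); the attaching arc of $A_0$ lies in $\Sigma_+$, where $f$ already extends holomorphically; consecutive discs overlap, with their attaching arcs tracing a path in $M$ from $\Sigma_+$ to $(0,0)$ that crosses the intervening rays; and $\bigcup_t A_t$ covers a one‑sided neighborhood $U_0^-$ of $(0,0)$ inside $\{v<P\}$. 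The continuity principle then propagates the holomorphic extension of $f$ from a neighborhood of $A_0$ along the whole family, producing $\tilde f_1\in\mathcal{O}(U_0^-)$ with $\tilde f_1|_{M\cap U_0}=f$. Running the mirror‑image construction from $\Sigma_-$, with discs bulging into $\{v>P\}$, gives $\tilde f_2\in\mathcal{O}(U_0^+)$. After shrinking, one may take the common polydisc $U_0$ so that $M\cap U_0$ is connected and $U_0\setminus M$ has precisely the two components $U_0^{\pm}$, and $\tilde f_1,\tilde f_2$ are the asserted extensions. (One could instead save one of the two propagations by first quoting Tr\'{e}preau's theorem from Section 1 to extend $f$ to one side, $M$ being minimal at $0$ as $\Delta P\not\equiv 0$, and then propagating only the extension coming from the other sector.)

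The step I expect to be genuinely delicate is the propagation itself. A family of attached discs confined to a single sector only fills a cuspidal horn at the vertex rather than a one‑sided neighborhood, and near each Levi‑flat ray $\theta_i$ the room for discs staying on the favorable side of $M$ shrinks to zero thickness. Overcoming this requires a careful iteration — extend $f$ first to thin one‑sided collars of $M$ on both sides of a ray, reglue across $M$ by Morera's theorem (using that $f$ is CR) to recover an honest two‑sided neighborhood of that ray, and only then push the extension into the adjacent sectors — or, equivalently, an appeal to a Tr\'{e}preau‑type propagation‑of‑extendability theorem together with a semicontinuity argument at the limiting rays and at the vertex. I would present the disc/continuity‑principle version, to stay within the tools assembled in Section 1; the compatibility of $\tilde f_1,\tilde f_2$ off $M$ and the final choice of $U_0$ are then routine.
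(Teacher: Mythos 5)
Your overall strategy---discs attached over $\Sigma_+$ give extension to one side, discs attached over $\Sigma_-$ give extension to the other, and the sign of the bulging is governed by $\Delta P$ on the $z$-projection---matches the paper's starting point, and your superharmonicity computation for $P\circ Z$ is a clean way to see which side an attached disc enters. But the proof has a genuine gap exactly at the step you yourself flag as ``delicate'': producing the extension \emph{at the vertex $(0,0)$}, not merely over the open sectors. Your proposed propagation cannot work as described. To cover a one-sided neighborhood of $(0,0)$ in $\lbrace v<P\rbrace$ by a family of attached discs you would need discs whose $z$-projections sweep through a full neighborhood of $0$ in the $z$-plane, hence through the interiors of the pseudoconcave sectors; but by your own sign computation such discs bulge into $\lbrace v\ge P\rbrace$ there, so the family cannot stay on the favorable side. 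A family confined to $\overline{\Sigma_+}$ fills only a cuspidal horn, as you note. The two repairs you sketch do not close this: the Morera regluing across $M$ near a Levi-flat ray presupposes extension to \emph{both} sides of $M$ near that ray, which is precisely what is not yet available; and Tr\'{e}preau's theorem at the minimal point $0$ gives extension to \emph{some} side without telling you which, so it cannot be combined with a one-sector propagation to guarantee both sides.

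The paper closes this gap by a different and more concrete device: it builds a \emph{single} analytic disc attached to $M$ with $A(1)=(0,0)$ whose exit vector is quantitatively transverse, and then quotes the standard translation-of-a-nontangential-disc theorem (2.12 of Chapter V in \cite{MP06}) to get the one-sided extension at $(0,0)$ in one stroke. Concretely, it takes ``egg-shaped'' discs $Z_n$ with images in $\Sigma_+$, one vertex fixed at a strongly pseudoconvex point $q$ and the other tending to $0$; it perturbs $M$ by $\tau=\chi\cdot\epsilon\vert z\vert^2$ supported near $q$ (far from the origin, so pseudoconvexity is unchanged) and estimates the derivative of the Hilbert transform of $\tau_n$ explicitly to obtain a uniform lower bound $\partial U_n/\partial\theta(0)\ge\epsilon_0>0$ --- this is what rules out the exit vector degenerating to tangency as $Z_n(1)\to 0$, which is the analytic content your sketch is missing; finally it translates $Z_{n_0}$ by $c_{n_0}$ so the tip sits exactly at $0$ and checks, via boundedness of the Hilbert transform on H\"{o}lder spaces, that the exit vector changes by at most $\epsilon_0/2$. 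If you want to complete your write-up, you need either to reproduce an estimate of this kind or to supply an actual propagation theorem valid across the Levi-flat rays down to the vertex; as it stands the central step is asserted rather than proved.
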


We are ready to prove our main theorem now.

\begin{proof}[Proof of Theorem \ref{mainth}]
The assumption of the existence of hyperbolic orbit accumulation point $p$, and we let $\tilde{p}$ be the dual hyperbolic orbit accumulation point of $p$. We also let $v=\rho(z, \bar{z})$ be the local defining function of $\partial\Omega$ with $p=(0, 0)$.

Let us assume firstly there exists a strongly pseudoconcave point in any neighborhood of $p=(0, 0)$. Because, otherwise there is no strongly pseudoconcave point in a neighborhood $U_0$ of $q$ in $\partial\Omega$; then $U_0$ is a pseudoconvex piece of $\partial\Omega$ including $p$. By the Proposition $1$ in \cite{Kr12}, there is no strongly pseudoconcave point (in a regular sense) on $\partial\Omega$ and the theorem follows.

We can now also assume there is a strongly pseudoconvex point in any neighborhood of $p=(0, 0)$. Because,  otherwise, there are only strongly and weakly pseudoconcave points around $p$. By looking at Lemma \ref{lemmaabouttheta} and an a special case of Proposition \ref{prop} (let $\Sigma_+$ vanish), one can see it is a holomorphic extension through $(0,0)$ to outside. But this contradicts with a similar argument as in Greene-Krantz \cite{GK93} as follow.

By Cartan's Theorem in \cite{Na95}, $J(\phi_j)$ must tend to zero at any point in $\Omega$, where $J$ denotes the Jacobian, because $\lbrace\phi_j\rbrace\rightarrow p$. On the other hand $\lbrace\phi_j^{-1}\rbrace$ is a family of automorphism and hence a normal family. Moreover, CR functions $J(\phi_j^{-1})\vert_{\partial\Omega}$ extends to the holomorphic hull $\widehat{\Omega}$ of $\Omega$ which at least contains $p$ as an interior point by the discussion above. Now extensions $\widehat{ J(\phi_j^{-1})}$ of $J(\psi_j^{-1})$ converges uniformly on any compact subset of $\widehat{\Omega}$. Pick up an compact subset $K$ of $\widehat{\Omega}$ containing $p$, and then the limit of subsequence of $\widehat{\vert J(\psi_j^{-1})\vert}$ is bounded because $\lbrace\vert J(\psi_j^{-1})\vert\rbrace_{j=1}^\infty$ are uniformly bounded. But this is impossible because $\vert J(\psi_j)\vert$ goes to zero.

Suppose now there exist a strongly pseudoconcave point and a storngly pseudoconvex point in any neighborhood of $p=(0, 0)$. By looking at Lemma \ref{lemmaabouttheta}, it forces both of inequalities $\Delta\rho<0$ and $\Delta\rho>0$ having solutions. By continuity, there must be (at least) two sectors $\lbrace (r, \theta): S_1:=(\theta_j, \theta_{j+1})\rbrace$ and $\lbrace (r, \theta): S_2:=(\theta_k, \theta_{k+1})\rbrace$ such that $P_z^{-1}S_1\cap\partial\Omega$ is strongly pseudoconvex but $P_z^{-1}S_2\cap\partial\omega$ is strongly pseudoconcave.

In view of Proposition \ref{prop}, we see that any CR function defined in a germ of $\Omega$ at $p$ will extend to a holomorphic function to each side of $\partial\Omega$ (the extension to outside is what we concern). We use the similar argument mentioned above to find the contradiction again. 
\end{proof}

\section{Proof of Proposition \ref{prop}}\label{proofofproposition}

We introduce the following definition to name the both sides of a hypersurface.

\begin{definition}
Let $M\subset\mathbb{C}^2$ be a pseudoconvex hypersurface in the sense of Definition \ref{pseudoconvexity}, then we call the side of $M$ contained in $\lbrace (z,\bar{z}, u,v)\in\mathbb{C}^2: v>\rho(z,\bar{z},u)\rbrace$ as the pseudoconvex side and otherwise, the pseudoconcave side. Let $M\subset\mathbb{C}^2$ be a pseudoconcave hypersurface, then we call the side of $M$ contained in $\lbrace (z,\bar{z}, u,v)\in\mathbb{C}^2: v>\rho(z,\bar{z},u)\rbrace$ as the pseudoconcave side and otherwise, the pseudoconvex side.
\end{definition}

To prove the holomorphic extension in Proposition \ref{prop}, we study the exit vectors of analytic discs. The exit vectors of an analytic disc is defined by:

\begin{equation*}
\left.-\frac{\partial A}{\partial\zeta}(\zeta)\right\vert_{\zeta=1}=\left.-\frac{\partial A}{\partial r}(r)\right\vert_{r=1}=\left.i\frac{\partial A}{\partial\theta}(e^{i\theta})\right\vert_{\theta=0},
\end{equation*}
where the first derivative is defined to be one-side derivative and the last two equality follows by chain rule.

Informally speaking, the direction which the exit vector points to is the direction of the holomorphic extension. Hence, we need to find at least two analytic disc whose exit vectors point to both sides. For the following paragraphs, we look for the analytic disc $A(z)=(Z(z)=X(z)+iY(z), W(z)=U(Z)+iV(z))$ whose exit vector point to the pseudoconvex side, i.e. $\dfrac{\partial U}{\partial\theta}(0)>0$.

It is easy to show with Riemann mapping theorem and Carath\'{e}dory theorem that there exist analytic discs $\lbrace Z'_n\rbrace$ in $\mathbb{C}$ satisfying:
\begin{enumerate}
\item The images $D'_n$ of $\lbrace Z'_n\rbrace$ are totally contained in $\Sigma_+\cup(0,0)$, so that  $Z'_n(-1)=q$ for all $n$ where $q$ is a strongly pseudoconvex point;
\item $\arg Z'_n(-1)=\arg Z'_n(1)=\dfrac{\theta_{k+1}-\theta_k}{2}$;
\item $Z'_n(1)\rightarrow (0, 0)$ as $n\rightarrow\infty$ and the boundaries of images are smooth; \item $D'_n$ are symmetric with respect to the ray $\theta=\dfrac{\theta_{k+1}-\theta_k}{2}$.
\end{enumerate}

By the solution of the Bishop equation, there exist a family of analytic discs $A'_n: \overline{\Delta}\rightarrow\mathbb{C}^2$ attached to $P_z^{-1}\Sigma_+\cap M$. Let $A'_n(z)=(Z'_n(z)=X'_n(z)+iY'_n(z), W'_n(z)=U'_n(Z)+iV'_n(z))$.

The first observation on exit vectors of $\lbrace Z'_n\rbrace$ is, $\dfrac{\partial U'_n}{\partial\theta}(0)\geq 0$. Otherwise, $\dfrac{\partial U'_{n_0}}{\partial\theta}(0)<0$ for some ${n_0}$, and by a well-known argument of the translation of a nontangent analytic disc  (see 2.12 of Chapter V in \cite{MP06}), there is $V_0\subset M\cap P_z^{-1}\Sigma_+$ containing $A'_{n_0}(1)$ such that each of continuous CR function in $V_0$ can extends to the pseudoconcave side which contradicting the Lewy extension theorem ($V_0$ is totally contained in a strongly pseudoconvex piece of $M$). 

Indeed, we will prove that, there exists a family of analytic disc $Z_n$ satisfying not only the properties above, but also a better uniform lower bound of $\dfrac{\partial U_n}{\partial\theta}(0)$. 

\begin{lemma}
There exist analytic discs $\lbrace Z_n\rbrace$ in $\mathbb{C}$ of which the images are totally contained in $\Sigma_+\cup(0,0)$, so that  $Z_n(-1)=q$ for all $n$ where $q$ is a strongly pseudoconvex point and $\arg Z_n(-1)=\arg Z_n(1)=\dfrac{\theta_{k+1}-\theta_k}{2}$; of which $Z_n(1)\rightarrow (0, 0)$ as $n\rightarrow\infty$ and the boundaries of images are smooth, and the images are symmetric with respect to the ray $\theta=\dfrac{\theta_{k+1}-\theta_k}{2}$. It also satisfies the following property, there exist $\epsilon_0>0$ so that $\dfrac{\partial U_n}{\partial\theta}(0)\geq\epsilon_0$
\end{lemma}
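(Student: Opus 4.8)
The plan is to write $\frac{\partial U_n}{\partial\theta}(0)$ as a weighted boundary integral and then to force it above a fixed positive constant by choosing the $Z_n$ long and thin near their tips, with the homogeneity of $P$ carrying the estimate; the inequality $\frac{\partial U'_n}{\partial\theta}(0)\ge 0$ already established for the $Z'_n$ is the baseline to be upgraded.

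\emph{Reduction.} Let $A_n=(Z_n,W_n)$ be the Bishop disc attached to $M$ coming from the rigid defining function $v-P$. Then $V_n:=\Im W_n$ is the Poisson extension to $\Delta$ of $(P\circ Z_n)|_{S^1}$, $U_n:=\Re W_n$ is its harmonic conjugate, and the Cauchy--Riemann equations in polar coordinates give $\frac{\partial U_n}{\partial\theta}(0)=-\frac{\partial V_n}{\partial r}(1)$; evaluating the regularised normal derivative of the Poisson kernel turns this into
\[
\frac{\partial U_n}{\partial\theta}(0)=\frac{1}{4\pi}\,\mathrm{p.v.}\int_{-\pi}^{\pi}\frac{(P\circ Z_n)(e^{is})-(P\circ Z_n)(1)}{\sin^{2}(s/2)}\,ds .
\]
Two observations guide the construction. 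First, since $Z_n$ is holomorphic with image in $\overline{\Sigma_+}\cup\{0\}$, where $\Delta P\le 0$ by Lemma~\ref{lemmaabouttheta}, the function $P\circ Z_n$ is superharmonic, so $h_n:=P\circ Z_n-V_n\ge 0$ with $h_n(1)=0$; hence $\frac{\partial h_n}{\partial r}(1)\le 0$, which forces $\frac{\partial U_n}{\partial\theta}(0)\le-\frac{\partial}{\partial r}(P\circ Z_n)(1)=O\!\big(\rho_n^{d-1}|Z_n'(1)|\big)$, with $d=\deg P$ and $\rho_n:=|Z_n(1)|\to 0$. Thus $|Z_n'(1)|$ must blow up at least like $\rho_n^{-(d-1)}$: the discs must be taken thin near the tip. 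Second, the anisotropic dilation $\sigma_\lambda(z,w)=(\lambda z,\lambda^{d}w)$ preserves $M$ and permutes the sectors $\Sigma_{\pm},\Sigma_0$; rescaling the tip to the unit circle by $\hat Z_n:=\rho_n^{-1}Z_n$, homogeneity gives $(P\circ Z_n)(e^{is})=\rho_n^{d}(P\circ\hat Z_n)(e^{is})$, hence $\frac{\partial U_n}{\partial\theta}(0)=\rho_n^{d}\frac{\partial\hat U_n}{\partial\theta}(0)$, so a uniform lower bound is exactly the statement that the rescaled exit-vector components grow at least like $\rho_n^{-d}$.

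\emph{Construction.} I would take $D_n:=Z_n(\Delta)$ to be a thin symmetric sectorial region about the bisecting ray $\{\arg z=\alpha\}$ of $\Sigma_+$, with $Z_n(-1)=q$ fixed, $Z_n(1)=\rho_n e^{i\alpha}\to 0$, a fixed small angular aperture $\beta$ (small relative to $d$ and to the width of $\Sigma_+$), and a prescribed convex profile near the tip. In logarithmic coordinates $D_n$ is essentially a rectangle with sides $\log(|q|/\rho_n)\to\infty$ and $2\beta$, so its conformal modulus tends to $\infty$ and the arc of $S^1$ that $Z_n$ maps onto the tip cap of $D_n$ has exponentially small length $\delta_n\approx\rho_n^{c/\beta}$; this concentrates the ``dip'' of $(P\circ\hat Z_n)|_{S^1}$ at $\zeta=1$, and a Taylor analysis of the weighted integral near $s=0$ gives $\frac{\partial\hat U_n}{\partial\theta}(0)\ge c'\delta_n^{-1}\approx c'\rho_n^{-c/\beta}$. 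Choosing $\beta$ with $c/\beta\ge d$ then yields $\frac{\partial U_n}{\partial\theta}(0)=\rho_n^{d}\frac{\partial\hat U_n}{\partial\theta}(0)\ge c'\rho_n^{d-c/\beta}\ge\epsilon_0>0$, with all the listed geometric properties of $\{Z_n\}$ built in. Equivalently, the endgame can be phrased as a compactness argument in the rescaled picture: were no such $\epsilon_0$ to exist, one would extract a limiting analytic disc attached to a strongly pseudoconvex piece of $M$ with exit vector tangent to $M$, contradicting the Lewy extension theorem by the same translation-of-a-nontangential-disc argument that gave $\frac{\partial U'_n}{\partial\theta}(0)\ge 0$.

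\emph{Main obstacle.} The work is entirely in the construction and the estimate of the weighted integral. One must (i) choose the profile of $\partial D_n$ near $Z_n(1)$ so that $P$ genuinely increases along $\partial D_n$ as one leaves the tip --- a sign condition relating the tangent and curvature of $\partial D_n$ at $e^{i\alpha}$ to the real Hessian of $P$ there, which is constrained but not determined by $\Delta P(e^{i\alpha})<0$ and by the symmetry of $\Sigma_+$ --- so that the $|s|$-near-$0$ part of the integral is positive of the expected size $\rho_n^{-c/\beta}$; and (ii) show that the contribution of the remaining arc, where $Z_n$ runs near $q$ on a strongly pseudoconvex piece of $M$ bounded away from $0$, does not cancel it. The scale invariance from the homogeneity of $P$ reduces (i) to a single model computation with an $n$-independent constant; I expect (ii), the bookkeeping over the ``far'' arc, to be the delicate point.
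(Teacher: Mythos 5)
Your strategy is fundamentally different from the paper's, and it has a gap that you yourself flag but do not close. You try to extract the uniform lower bound from the behaviour of the disc at its tip $Z_n(1)\to 0$, via homogeneity of $P$ and a rescaling/modulus estimate. But the tip is exactly where the geometry degenerates: along the bisector $\Delta P(\rho_n e^{i\alpha})\sim \rho_n^{d-2}\to 0$, so there is no uniform strong pseudoconvexity to lean on there. Concretely, (a) your claimed bound $\partial\hat U_n/\partial\theta(0)\ge c'\delta_n^{-1}$ is asserted rather than derived: a Taylor expansion of the numerator $(P\circ\hat Z_n)(e^{is})-(P\circ\hat Z_n)(1)$ at $s=0$ (the first-order term vanishes by symmetry) gives a near-tip contribution of order $\delta_n\cdot\partial_s^2(P\circ\hat Z_n)(1)$, whose \emph{sign} depends on the curvature of $\partial D_n$ at the tip against the full real Hessian of $P$, not merely on $\Delta P<0$ --- this is precisely your ``Main obstacle (i)'', and it is the whole problem, not a technical remainder; (b) the far-arc contribution near $q$ is likewise of uncontrolled sign; (c) the compactness fallback fails because the limiting boundary point is $(0,0)$, which is Levi-degenerate (it lies in $\Sigma_0$, on the closure of all the sectors), so a tangential exit vector of a limit disc there does not contradict the Lewy extension theorem --- the translation-of-a-nontangential-disc argument needs a \emph{strongly} pseudoconvex point.

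The paper's proof inverts your picture: all the quantitative input is localized at the \emph{fixed} endpoint $q$, where everything is uniform, and the tip is handled only by the soft inequality. One perturbs $M$ to $M'=\{v=\rho+\tau\}$ with $\tau=\chi\cdot\epsilon|z|^2$ a bump supported in a fixed neighbourhood $Q$ of $q$ and bounded below by $\tilde\epsilon$ on a smaller fixed neighbourhood $P$, small enough not to change pseudoconvexity. The nonnegativity argument you cite as the ``baseline'' then applies to $M'$ and yields $\frac{\partial T_1\rho(Z_n)}{\partial\theta}(0)+\frac{\partial T_1\tau_n}{\partial\theta}(0)\ge 0$. The discs are reparametrized by real Möbius maps so that arcs $[t_1,\pi)$ and $(-\pi,t_2]$ of fixed length are mapped into $P$ for every $n$; since $\tau_n\ge 0$ is supported away from $\zeta=1$, the principal value disappears, the kernel $1/\sin^2(t/2)$ is bounded below on those arcs, and an explicit computation gives $\frac{\partial T_1\tau_n}{\partial\theta}(0)\le -\frac{1}{2}(t_2-t_1+2\pi)\tilde\epsilon=:-\epsilon_0$ uniformly in $n$, whence $\frac{\partial U_n}{\partial\theta}(0)\ge\epsilon_0$. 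No estimate at the degenerate tip is ever needed. If you want to salvage your route you would have to prove the model computation in your obstacle (i) with a favorable sign and beat the far-arc term; the paper's bump-at-$q$ device is the idea that makes both issues disappear.
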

\begin{proof}
Let $M=\lbrace (z=x+iy, w=u+iv): v=\rho(z, \bar{z})\rbrace$ be a hypersurface of $\mathbb{C}^2$ and two open neighborhood $P$ and $Q$ of $q$ so that $(0, 0)\notin Q$ and $P\subsetneq Q\subsetneq\Sigma_+$. We first construct the images $D_n$ of the analytic discs in $\mathbb{C}$. Let $D_n$ be the simply connected bounded `eggs shaped' domain (See Figure \ref{fi}) with smooth boundaries with one vertex $q$ contained in $\Sigma_+$ such that $D_n\subset D_{n+1}$. We also, for convenience, let each of $D_n$ be symmetric with the angle bisector $\theta=\dfrac{\theta_{k+1}-\theta_k}{2}$ and all of $D_n\cap P$ coincides. We also assume the other vertex of $D_n$ approaches to $(0, 0)$ as $n$ goes to infinity.

\begin{figure}
   \centering
   \includegraphics{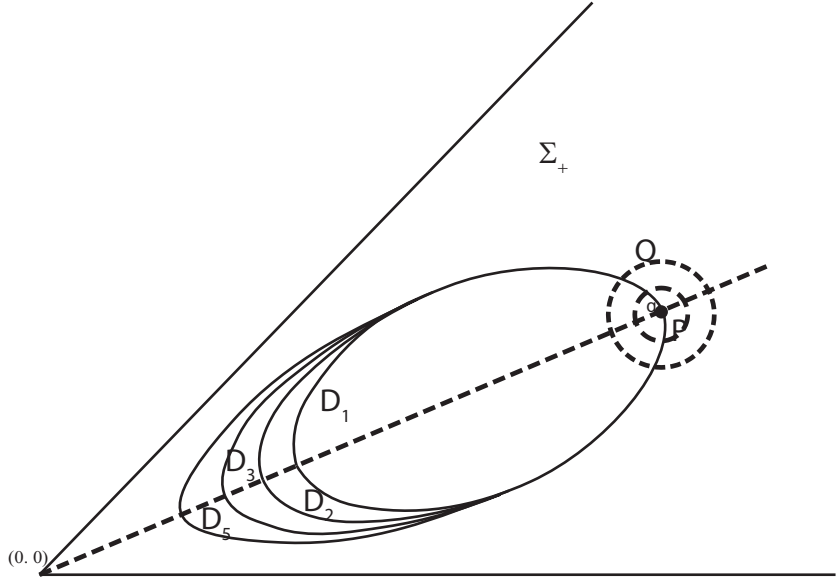}
   \caption{the `eggs shaped' domain.}
   \label{fi}
\end{figure}

Now, by Riemann map and probably composition with an M\"{o}bius trsnsform, one can find analytic discs $Z_n$ satisfying the following properties.
\begin{enumerate}
\item The images of $Z_n$ is exactly $D_n$.
\item $Z_n(1)$ is the other vertex other than $q$, hence $Z_n(1)\rightarrow (0, 0)$ as $n\rightarrow\infty$.
\item $Z_n(-1)=q$ for each $n$.
\end{enumerate}
By the Carath\'{e}odory theorem, all of $Z_n$ extends smoothly up to the boundaries. So $Z_n(e^{it})$ makes sense now (e.g. $Z_n(e^{i\pi})=Z_n(-1)=q$). For getting the lower bounds, we also let $Z_n$ satisfying the extra property.

\begin{enumerate}
\item[(4)] there exists $\dfrac{\pi}{2}<t_1<\pi$ and $-\pi<t_2<-\dfrac{\pi}{2}$ such that $Z_n(e^{i\frac{\pi}{2}}),Z_n(e^{-i\frac{\pi}{2}})\notin Q$ but $Z_n(e^{it})$ maps $[t_1, \pi)$ and $(-\pi, t_2]$ into $P$ for all of $n$.
\end{enumerate}

The property above is done again, by composition with m\"{o}bius transforms. Indeed the composition with $\dfrac{z-\alpha}{1-\bar{\alpha}z}$ adjusts the map $Z_n(e^{it})$ once $\alpha$ is real while it does not change the value at $\pi$ and $-\pi$. 

Let $A_n(z)=(Z_n(z)=X_n(z)+iY_n(z), W_n(z)=U_n(z)+iV_n(z))$ be an analytic disc attached to $M$ generated by $Z_n$ defined above. By the solution of the Bishop Equation $U_n(e^{i\theta})=T_1(\rho(Z_n(e^{i\theta}),\overline{Z_n}(e^{i\theta}))$, where $T_1$ is the modified Hilbert operator. 

Next, we will perturb the hypersurface $M$ a little around $q$ locally. This perturbation will be so small that it does not change the pseudocovexity. For this aim, we define the characteristic smooth function $\chi:\mathbb{C}\rightarrow\mathbb{R}$ such that
\begin{equation*}
  \chi(z,\bar{z})=%
  \begin{cases}
    1 &\text{if $z\in P$} \\
    0 &\text{if $z\notin Q$}.
  \end{cases}
\end{equation*}

Also, define $\tau=\chi\cdot\epsilon\vert z\vert^2$, where $\epsilon$ is a small positive real number so that $M':=\lbrace v=\rho+\tau\rbrace$ has the same pseudoconvexity as $M$. It is not hard to see $\tau$ in $P$ has lower bound, say $\tilde{\epsilon}$. Denote $\tau_n(e^{i\theta})=\tau(Z_n(e^{i\theta}))$.

Consider the hypersurface $v=\rho+\tau$ after perturbation, one observes the fact $\dfrac{\partial T_1\rho(Z_n(e^{i\theta}))}{\partial \theta}(0)+\dfrac{\partial T_1\tau_n(e^{i\theta})}{\partial \theta}(0)\geq 0$. The reason is same as the argument we used for $A'_n$ and the perturbation does not change the pseudoconvexity. Hence, we need to get a negative upper uniform bound of $\dfrac{\partial T_1\tau_n(e^{i\theta})}{\partial \theta}(0)$.

In fact, the modified Hilbert transform is given by,
\begin{equation*}
T_1\tau_n(e^{i\theta})=\mathrm{p.v.}\frac{1}{2\pi}\displaystyle\int_{-\pi}^\pi \!\frac{\tau_n(e^{i(\theta-t)})}{\tan(t/2)}\,\mathrm{d}t+C.
\end{equation*}

By changing variables, one obtains
\begin{equation*}
T_1\tau_n(e^{i\theta})=\mathrm{p.v.}\frac{1}{2\pi}\displaystyle\int_{-\pi+\theta}^{\pi+\theta} \!\frac{\tau_n(e^{it})}{\tan((\theta-t)/2)}\,\mathrm{d}t+C.
\end{equation*}

We claim that
\begin{equation}
\begin{split}
\mathrm{p.v.}\dfrac{1}{2\pi}\displaystyle\int_{-\pi+\theta}^{\pi+\theta} \!\dfrac{\tau_n(e^{it})}{\tan((\theta-t)/2)}\,\mathrm{d}t
=\displaystyle\int_{-\pi+\theta}^{-\delta_n}\!\dfrac{\tau_n(e^{it})}{\tan((\theta-t)/2)}\,\mathrm{d}t\\+
\displaystyle\int_{\delta'_n}^{\pi+\theta}\!\dfrac{\tau_n(e^{it})}{\tan((\theta-t)/2)}\,\mathrm{d}t 
\end{split}
\end{equation}
where $\delta_n, \delta'_n>0$. Indeed, this is because our $\tau$ just change $M$ around $A(-1)$ locally and most of values on unit circle vanishes.
 
Due to the claim, if $\vert\hat{\theta}_n\vert<\min\lbrace\dfrac{\delta_n}{2},\dfrac{\delta'_n}{2}\rbrace$ 
\begin{equation}\label{4p}
\begin{split}
\dfrac{\partial T_1\tau_n(e^{i\theta})}{\partial \theta}(\hat{\theta}_n)=-\dfrac{1}{2}\displaystyle\int_{-\pi+\hat{\theta}_n}^{-\delta_n}\!\dfrac{\tau_n(e^{it})}{\sin^2((\hat{\theta}_n-t)/2)}\,\mathrm{d}t-\dfrac{\tau_n(e^{i(-\pi+\hat{\theta}_n)})}{\tan(\pi/2)}\\-\dfrac{1}{2}\displaystyle\int_{\delta'_n}^{\pi+\hat{\theta}_n}\!\dfrac{\tau_n(e^{it})}{\sin^2((\hat{\theta}_n-t)/2)}\,\mathrm{d}t+\dfrac{\tau_n(e^{i(\pi+\hat{\theta}_n)})}{\tan(-\pi/2)}.
\end{split}
\end{equation}
In fact, $\vert\hat{\theta}_n\vert<\min\lbrace\dfrac{\delta_n}{2},\dfrac{\delta'_n}{2}\rbrace$ implies $\hat{\theta}_n+\delta_n>\delta_n-\dfrac{\delta_n}{2}=\dfrac{\delta_n}{2}>0$ for the first integral in (\ref{4p}). Hence, $0<\dfrac{\delta_n}{2}\leq\hat{\theta}_n-t\leq\pi$ and $\tan((\hat{\theta}_n-t)/2)$ are well defined (Riemann integrable). Similarly, one can show the other integral in (\ref{4p}) is also Riemann integrable. And (\ref{4p}) easily follows, in particular, 
\begin{equation}\label{im}
\begin{split}
\dfrac{\partial T_1\tau_n(e^{i\theta})}{\partial \theta}(0)=-\dfrac{1}{2}\displaystyle\int_{-\pi}^{-\delta_n}\!\dfrac{\tau_n(e^{it})}{\sin^2((-t)/2)}\,\mathrm{d}t-\dfrac{\tau_n(e^{-i\pi})}{\tan(\pi/2)}\\-\dfrac{1}{2}\displaystyle\int_{\delta'_n}^{\pi}\!\dfrac{\tau_n(e^{it})}{\sin^2((-t)/2)}\,\mathrm{d}t+\dfrac{\tau_n(e^{i\pi})}{\tan(-\pi/2)}.
\end{split}
\end{equation}
Investigating $\displaystyle\int_{-\pi}^{-\delta_n}\!\dfrac{\tau_n(e^{it})}{\sin^2((-t)/2)}\,\mathrm{d}t\geq\displaystyle\int_{-\pi}^{t_2}\!\dfrac{\tau_n(e^{it})}{\sin^2((-t)/2)}\,\mathrm{d}t\geq\displaystyle\int_{-\pi}^{t_2}\!\tilde{\epsilon}\,\mathrm{d}t=(t_2+\pi)\tilde{\epsilon}$ and similarly $\displaystyle\int_{\delta'_n}^{\pi}\!\dfrac{\tau_n(e^{it})}{\sin^2((-t)/2)}\,\mathrm{d}t\geq (\pi-t_1)\tilde{\epsilon}$ while the second of the forth terms in equation \ref{im} are zeros. Hence, the negative upper uniform bound of $\dfrac{\partial T_1\tau_n(e^{i\theta})}{\partial \theta}(0)$ is $-\dfrac{1}{2}(t_2-t_1+2\pi)\tilde{\epsilon}$. We denote the upper bound $-\epsilon_0<0$.

Thus, we have $\dfrac{\partial U_n}{\partial\theta}(0)=\dfrac{\partial T_1\rho(Z_n(e^{i\theta}))}{\partial \theta}(0)\geq -\dfrac{\partial T_1\tau_n(e^{i\theta})}{\partial \theta}(0)\geq\epsilon_0$.
\end{proof}

So far, we exhibit a family of analytic discs $A_n$ with their generator $Z_n$. The nice property of $A_n$ is $\dfrac{\partial U_n}{\partial\theta}(0)\geq\epsilon_0>0$ . We are going to treat $Z_n$ as a family of candidates and try to translate them so that after it, $\partial D_n$ passes through $(0, 0)$. In other words, we will look at $Z_n+c_n$ where $c_n=(a_n, b_n)\in\mathbb{C}$ are constant vector determined by $Z_n$ so that $Z_n(1)+c_n=(0, 0)$. Since our move is translation and $Z_n(1)$ approaches to $(0, 0)$, also because of the fact $\rho$ is a smooth function  we can expect $\dfrac{\partial T_1(\rho(Z_n+c_n))}{\partial\theta}(0)$ will not change much from $\dfrac{\partial T_1(\rho(Z_n))}{\partial\theta}(0)$. We also hope the difference is smaller as $\dfrac{\epsilon_0}{2}$ for some $Z_{n_0}$ and then we can conclude $\dfrac{\partial T_1(\rho(Z_{n_0}+c_{n_0}))}{\partial\theta}(0)\geq\epsilon_0-\dfrac{\epsilon_0}{2}=\dfrac{\epsilon_0}{2}>0$. And then we will use the analytic disc $A$ generated from $Z_{n_0}+c_{n_0}$ to complete the final argument of holomorphic extension at $(0, 0)$. In fact, we have the following lemma.

\begin{lemma}
There exits $n_0$ such that $\left\vert\dfrac{\partial T_1(\rho(Z_{n_0}+c_{n_0}))}{\partial\theta}(0)-\dfrac{\partial T_1(\rho(Z_{n_0}))}{\partial\theta}(0)\right\vert\leq\dfrac{\epsilon_0}{2}$.
\end{lemma}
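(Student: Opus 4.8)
The plan is to reduce the claimed estimate to the boundedness of the modified Hilbert transform on Hölder spaces, applied to the increment of the defining function produced by the translation. Set $c_n:=-Z_n(1)$, so that $Z_n(1)+c_n=(0,0)$ by construction and, by properties (2)--(3) of the discs $Z_n$, $c_n\to 0$ as $n\to\infty$; for $n$ large the translated discs $Z_n+c_n$ are still admissible data for the Bishop equation attached to $M$. Writing $\widetilde U_n=-T_1\big(\rho(Z_n(\cdot)+c_n,\overline{Z_n(\cdot)+c_n})\big)$ and $U_n=-T_1\big(\rho(Z_n(\cdot),\overline{Z_n(\cdot)})\big)$ for the real parts of the corresponding Bishop solutions, the quantity to be bounded is
\begin{equation*}
\Big|\frac{\partial\widetilde{U}_n}{\partial\theta}(0)-\frac{\partial U_n}{\partial\theta}(0)\Big|=\Big|\frac{\partial}{\partial\theta}T_1(g_n)(0)\Big|,\qquad g_n(e^{it}):=\rho\big(Z_n(e^{it})+c_n,\overline{Z_n(e^{it})+c_n}\big)-\rho\big(Z_n(e^{it}),\overline{Z_n(e^{it})}\big).
\end{equation*}
Hence it suffices to prove $\tfrac{\partial}{\partial\theta}T_1(g_n)(0)\to 0$ as $n\to\infty$ and then take $n_0$ with this quantity at most $\epsilon_0/2$.

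First I would record the crude bound $\|g_n\|_{C^0(S^1)}\le \|D\rho\|_{C^0(K)}\,|c_n|$, where $K\subset\mathbb C$ is a fixed compact set containing all the $\overline{D_n}$ together with a neighborhood of $(0,0)$; since $c_n\to 0$ this already gives $\|g_n\|_{C^0(S^1)}\to 0$. The substantive step is to upgrade this to $\|g_n\|_{C^{1,\alpha}(S^1)}\to 0$. Differentiating in $t$ gives $\partial_t g_n(e^{it})=\big(D\rho(Z_n(e^{it})+c_n)-D\rho(Z_n(e^{it}))\big)\cdot\partial_t Z_n(e^{it})$, which is $O(|c_n|)$ in $C^{0,\alpha}(S^1)$ as soon as the family $\{Z_n\}$ is bounded in $C^{1,\alpha}(S^1)$: boundedness and Hölder continuity of $D^2\rho$ on $K$, combined with the corresponding bounds for $\partial_t Z_n$, control both the sup norm and the $C^{0,\alpha}$ seminorm of $\partial_t g_n$ by a constant times $|c_n|$. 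Because the egg-shaped domains $D_n$ are at our disposal --- subject only to $D_n\cap P$ being fixed and the free vertex $Z_n(1)$ tending to $(0,0)$ --- I would choose them uniformly $C^2$ (uniformly bounded boundary curvature, uniform inner and outer cone conditions), so that the Kellogg--Warschawski boundary regularity theorem for conformal maps, applied with the normalization $Z_n(\pm1)=$ the two vertices, yields $\sup_n\|Z_n\|_{C^{1,\alpha}(S^1)}<\infty$, hence $\|g_n\|_{C^{1,\alpha}(S^1)}\le C|c_n|\to 0$.

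To conclude, I would invoke that $T_1$ is bounded on $C^{1,\alpha}(S^1)$: it differs from the classical Hilbert transform by an additive constant, and the latter maps $C^{k,\alpha}(S^1)$ to itself boundedly (Privalov). Thus $\big|\tfrac{\partial}{\partial\theta}T_1(g_n)(0)\big|\le\|T_1 g_n\|_{C^1(S^1)}\le\|T_1 g_n\|_{C^{1,\alpha}(S^1)}\le C'\|g_n\|_{C^{1,\alpha}(S^1)}\to 0$, and choosing $n_0$ with the right-hand side at most $\epsilon_0/2$ finishes the proof.

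The point I expect to be delicate is precisely the uniform boundary regularity of the conformal maps $Z_n$ as $D_n$ degenerates toward having a vertex at $(0,0)$: a priori $\|\partial_t Z_n\|_{C^{0,\alpha}(S^1)}$ could blow up, and one must engineer the shrinking of $D_n$ near its free vertex so that curvature and the cone conditions stay uniform in $n$. An alternative that sidesteps uniform $C^{1,\alpha}$ bounds is to estimate $\tfrac{\partial}{\partial\theta}T_1(g_n)(0)$ directly from the principal-value representation as in the proof of the previous lemma: away from the singularity $t=0$ the kernel $-\tfrac12\sin^{-2}(t/2)$ is bounded, so that contribution is $O(\|g_n\|_{C^0(S^1)})=O(|c_n|)$; in a small fixed neighborhood of $t=0$ one uses that $\rho=P$ vanishes at least to second order at the origin and that both $Z_n(e^{it})$ and $Z_n(e^{it})+c_n$ lie near $(0,0)$ there to make the local contribution uniformly small in $n$.
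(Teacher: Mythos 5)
Your argument is essentially the paper's: both reduce the claim, via linearity and the boundedness of the Hilbert transform on $C^{1,\alpha}(S^1)$, to the estimate $\left\|\rho(Z_n+c_n)-\rho(Z_n)\right\|_{C^{1,\alpha}}\le K\vert c_n\vert\rightarrow 0$ followed by choosing $n_0$ with $\vert c_{n_0}\vert$ small. The one point where you go beyond the paper is that you explicitly flag and address (via uniform $C^{1,\alpha}$ bounds on the conformal maps $Z_n$, e.g.\ Kellogg--Warschawski for uniformly regular $D_n$) the fact that the constant $K$ in the composition estimate depends on $\sup_n\left\|Z_n\right\|_{C^{1,\alpha}(S^1)}$, a dependence the paper passes over with the phrase ``because of the smoothness of $\rho$''.
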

\begin{proof}
We see $\rho$ as a $C^{1,0.9}$ function of $\mathbb{C}$, although it is in fact a smooth function. Then 

\begin{equation*}
\begin{split}
\left\vert\dfrac{\partial T_1(\rho(Z_n+c_n))}{\partial\theta}(0)-\dfrac{\partial T_1(\rho(Z_n))}{\partial\theta}(0)\right\vert&=\left\vert\dfrac{\partial T(\rho(Z_n+c_n))}{\partial\theta}(0)-\dfrac{\partial T(\rho(Z_n))}{\partial\theta}(0)\right\vert\\&\leq\left\| T(\rho(Z_n+c_n))-T(\rho(Z_n))\right\|_{1, 0.9}
\end{split}
\end{equation*}

where $T$ is the (classical) Hilbert transform and $\parallel\cdot\parallel_{1, 0.9}$ is the norm of H\"{o}lder space $C^{1, 0.9}$. The first equality is because that $T_1$ is $T$ up to a constant and it vanishes after derivative. Since the Hilbert transform is a bounded linear operator from $C^{k,\alpha}(S^1)$ into itself when $k$ is a nonnegative integer and $0<\alpha<1$,

\begin{equation*}
\begin{split}
\left\| T(\rho(Z_n+c_n))-T(\rho(Z_n))\right\|_{1, 0.9}&\leq C_{1, 0.9}\left\|\rho(Z_n+c_n)-\rho(Z_n)\right\|_{1,0.9}\\&\leq C_{1, 0.9} K\left\|Z_n+c_n-Z_n\right\|_{1,0.9}
\end{split}
\end{equation*}
for some positive $K$, because of the smoothness of $\rho$. However, by observing the changes of the translation, $\left\|(Z_n+c_n)-Z_n\right\|_{1,0.9}=\mid c_n\mid$ because the translation does not change derivatives. Since $\mid c_n\mid\rightarrow 0$, we can pick up an $n_0$ such that $\mid c_{n_0}\mid<\dfrac{\epsilon_0}{2KC_{1, 0.9}}$ and this $n_0$ is what we need which competes the proof.
\end{proof}

\begin{remark*}
Note that the image of $Z_n+c_n$ might not be contained in $\Sigma_+\cup (0, 0)$ anymore.
\end{remark*}

\begin{proof}[Proof of Proposition \ref{prop}]
Let $A$ be a smooth analytic disc attached to $M$ generated from $Z_{n_0}+c_{n_0}$, of which the exit vector $\dfrac{\partial A}{\partial r}(1)$ points to the pseudoconvex side of $M$ at $(0, 0)$. By investigating  $\Sigma_-\cup(0, 0)$, it is not hard to see the existence of another smooth analytic disc $B$ attached to $M$ with the exit vector pointing pseudoconvex side. Note that pseudoconvex sides in this paragraph are, in fact, in different side, because $P_z^{-1}\Sigma_+\cap M$ is pseudoconvex but the other is pseudoconcave in sense of Definition \ref{pseudoconvexity}.

By again, the argument of the translation of a nontangent analytic disc (see 2.12 of Chapter V in \cite{MP06}), the proposition follows.
\end{proof}

\section{Corollaries}

Theorem \ref{mainth} has several applications. The main point of it is to remove ``pseudoconvex'' condition from known results, e.g. in \cite{By03} and \cite{BP88}. We do not want to write out all of them because the wish for the conciseness of this note.

However, we mention that, one of most important applications is the one obtained by combining result of Theorem \ref{mainth} with that of Bedford-Pinchuk's theorem in \cite{BP91}. 

\begin{corollary}
Let $\Omega\subset\mathbb{C}^2$ be a bounded domain with smooth boundary of finite type. Suppose that the Bergman kernel of $\Omega$ extends to $\overline{\Omega}\times\overline{\Omega}$ minus the boundary diagonal set as a locally bounded function. There is a hyperbolic orbit boundary accumulation point on $\partial\Omega$ if and only if $\Omega$ is biholomorphic to one of the ellipsoids $\lbrace (z, w): \vert z\vert^{2m}+\vert w\vert^2<1, m\in\mathbb{Z}^+\rbrace$. \qed
\end{corollary}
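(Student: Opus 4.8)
The statement is an equivalence, and I would prove each direction by reduction to a known classification or extension theorem, with Theorem \ref{mainth} supplying the one genuinely new ingredient.

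\emph{The ``only if'' direction.} Suppose $p \in \partial\Omega$ is a hyperbolic orbit accumulation point, witnessed by $q \in \Omega$ and $\lbrace f_\nu\rbrace \subset \operatorname{Aut}(\Omega)$ with $f_\nu(q) \to p$. By Theorem \ref{mainth}, $\Omega$ is globally pseudoconvex, so $\Omega$ is a bounded pseudoconvex domain in $\mathbb{C}^2$ with smooth boundary of finite type. Moreover $\operatorname{Aut}(\Omega)$ is noncompact, since the orbit $\lbrace f_\nu(q)\rbrace$ leaves every compact subset of $\Omega$. I would then invoke Bedford--Pinchuk's classification \cite{BP91}: a bounded pseudoconvex domain in $\mathbb{C}^2$ with smooth finite-type boundary and noncompact automorphism group is biholomorphic to one of the ellipsoids $E_m := \lbrace (z,w) : |z|^{2m}+|w|^2 < 1 \rbrace$, $m \in \mathbb{Z}^+$. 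Note that for this half one uses only that $p$ is an orbit accumulation point; the hypothesis that it is \emph{hyperbolic} has been entirely absorbed into Theorem \ref{mainth}.

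\emph{The ``if'' direction.} Suppose $\Psi : \Omega \to E_m$ is a biholomorphism. First, $\Omega$ is automatically pseudoconvex: holomorphic convexity is intrinsic to the algebra $\mathcal{O}(\Omega)$ and hence a biholomorphic invariant, and $E_m$ is a domain of holomorphy. Thus $\Omega$ is a smoothly bounded pseudoconvex domain of finite type, so it --- like $E_m$ --- satisfies condition $R$, and by Bell's boundary-regularity theorem $\Psi$ extends to a homeomorphism $\overline{\Omega} \to \overline{E_m}$. Next I would exhibit a hyperbolic orbit accumulation point of $E_m$. For real $a \in (0,1)$ the automorphism
\[
\Phi_a(z,w) \;=\; \left( \frac{(1-a^2)^{1/(2m)}}{(1-a w)^{1/m}}\, z,\ \frac{w-a}{1-a w} \right)
\]
of $E_m$ satisfies $\Phi_a(0,0) = (0,-a)$ and $\Phi_a^{-1}(0,0) = (0,a)$, so with $a_\nu \nearrow 1$ the orbit of the origin under $\Phi_{a_\nu}$ converges to $(0,-1) \in \partial E_m$ while the orbit under $\Phi_{a_\nu}^{-1}$ converges to $(0,1) \in \partial E_m$; since $(0,-1) \ne (0,1)$, the point $(0,-1)$ is a hyperbolic orbit accumulation point of $E_m$. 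Transporting through $\Psi$, the maps $f_\nu := \Psi^{-1}\circ\Phi_{a_\nu}\circ\Psi \in \operatorname{Aut}(\Omega)$ and the base point $q := \Psi^{-1}(0,0)$ give $f_\nu(q) \to \Psi^{-1}(0,-1) \in \partial\Omega$ and $f_\nu^{-1}(q) \to \Psi^{-1}(0,1) \in \partial\Omega$, and these two limits are distinct because $\Psi^{-1}$ is injective on $\overline{E_m}$. Hence $\Psi^{-1}(0,-1)$ is a hyperbolic orbit accumulation point of $\partial\Omega$.

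\emph{The main obstacle.} There is no deep step here once Theorem \ref{mainth} is available; the corollary is essentially an assembly. The point requiring the most care is the ``if'' direction: one must first observe that $\Omega$ inherits pseudoconvexity (hence condition $R$ and Bell's extension) in order to transport the word \emph{hyperbolic}, which is a statement about a \emph{pair} of boundary points and therefore needs the boundary correspondence of $\Psi$. One should also verify that the hypotheses of \cite{BP91} are met exactly --- smooth bounded finite-type boundary, pseudoconvexity (from Theorem \ref{mainth}), and genuine noncompactness of the automorphism group (from the orbit accumulation point).
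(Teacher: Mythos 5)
Your proposal is correct and follows essentially the same route as the paper, which obtains the corollary by combining Theorem \ref{mainth} with Bedford--Pinchuk's classification in \cite{BP91}; the paper gives no further argument. The only difference is that you also write out the converse direction (explicit hyperbolic automorphisms of the ellipsoid transported via Bell's boundary extension), which the paper treats as immediate --- your verification there is sound.
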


\bigskip
\bigskip
\noindent {\bf Acknowledgments}. I thank a lot to my advisor Prof. Steven Krantz for always being patience to answer my any (even stupid) question who also encouraged me to write up this paper and carefully read the article. I also thank to Prof. John McCarthy for supporting the author's research financially. Moreover, I have much profited from the comments of the (anonymous) reviewer and a special thanks go to him/her. Last but not least, I appreciate Prof. \'{A}lvaro Pelayo for teaching me Symplectic Geometry where I studied the idea of deformation.


\bibliographystyle{plainnat}
\bibliography{mybibliography}
\end{document}